\date{}
\newtheorem{Theorem}{Theorem}[section]
\newtheorem{Proposition}[Theorem]{Proposition}
\theoremstyle{definition}
\theoremstyle{remark}
\newtheorem{Remark}[Theorem]{Remark}
\numberwithin{equation}{section}
\title{Relations for a class of terminating ${}_4F_3(4)$ hypergeometric series}
\author{Ilia D. Mishev
\footnote{Department of Mathematics, University of Colorado Boulder,
Campus Box 395, Boulder, CO 80309-0395, U.S.A.
E-mail address: ilia.mishev@colorado.edu}}
\begin{document}

\maketitle

\begin{abstract}
We derive relations for a certain class of terminating
${}_4F_3(4)$ hypergeometric series
with three free parameters. The invariance
group composed of these relations is shown to be isomorphic
to the symmetric group $S_3$. We further study relations for 
terminating ${}_3F_2(4)$ series that fall under two families.
By using a series reversal, we examine the corresponding 
terminating ${}_4F_3(1/4)$ and ${}_3F_2(1/4)$ series
relations. We additionally derive formulas for the sums of
the first $n+1$ terms of several nonterminating 
${}_3F_2(4)$ and ${}_3F_2(1/4)$
series. We also show
how certain known summation formulas for terminating
${}_2F_1(4)$ and ${}_3F_2(4)$ series follow as limiting
cases of some of our relations.
\end{abstract}

\section{Introduction}

The theory of the
hypergeometric series of type ${}_2F_1$
was systematically developed by Gauss \cite{Ga}.
Subsequently, generalized hypegeometric series
of type ${}_pF_q$, where $p$ and $q$ are nonnegative
integers, 
and special summations and relations among such series,
were investigated in the late nineteenth and   
early twentieth century by Thomae \cite{T}, 
Barnes \cite{Bar1,Bar2},
Ramanujan (see \cite{Har}), 
Whipple \cite{Wh1,Wh2,Wh3}, 
Bailey \cite{Ba1,Ba}, and others. 

Over the last thirty-five years there has
been a renewed interest in studying
hypergeometric series. In particular, among other things,
relations involving 
hypergeometric and basic hypergeometric series
have been described in terms of group theory
frameworks in papers by Beyer, Louck, and Stein \cite{BLS},
Srinivasa Rao, Van der Jeugt,  Raynal, Jagannathan, and Rajeswari
\cite{RJRJR}, Formichella, Green, and Stade \cite{FGS},
Mishev \cite{M1}, Green, Mishev, and Stade \cite{GMS,GMS2},
Van der Jeugt and Srinivasa Rao \cite{JR}, 
Lievens and Van der Jeugt \cite{LV1,LV2}.
Other works include 
Groenevelt \cite{Groen}, van de Bult, Rains, and Stokman \cite{BRS}, 
Krattenthaler and Rivoal \cite{KR}.

There are numerous applications of
hypergeometric series. There have been recent papers by 
Bump \cite{Bu}, Stade \cite{St1,St2,St3,St4}, 
and Stade and Taggart \cite{ST}
with applications in the theory of automorphic functions. 
Some other recent works, with applications in physics, were written by
Drake \cite{Dra}, Grozin \cite{Groz}, and Raynal \cite{R}.

In this paper, we study a certain class of terminating 
${}_4F_3(4)$ hypergeometric series 
with three free parameters
(see Section 2 for the relevant definitions
and terminology pertaining to hypergeometric series).
Hypergeometric series with argument 4 have not been
studied much in the past. Some of the few previous works
are papers by Chu \cite{Chu1} studying certain terminating
${}_2F_1(4)$ series summations perturbed by
two integer parameters, and Chen and Chu 
\cite{ChenChu1,ChenChu2} examining two classes of 
terminating ${}_3F_2(4)$ series summations perturbed by
two and three integer parameters, respectively.

The family of terminating ${}_4F_3(4)$ series we consider is given by
\begin{equation}
\label{Ser}
{}_4F_3 \left( \left. 
{\displaystyle -n,\frac{a}{2},\frac{a+1}{2},b
\atop \displaystyle a,1+a-c,c}\right| 
4\right),\\
\end{equation}
where $n$ is a nonnegative integer and the complex numbers
$a,b$, and $c$ are the three free parameters.

In Section 3, we employ of method of Bailey's (see 
\cite[Section 4.3]{Ba}) along with the
Chu--Vandermonde formula (see (\ref{chv})) to obtain a
transformation of the series in (\ref{Ser}) into a 
terminating ${}_4F_3(1/4)$ series. By reversing
the order of summation of the latter series
(see (\ref{rs})), 
we obtain a transformation of
(\ref{Ser}) into another terminating ${}_4F_3(4)$
series of the same family. By suitably 
normalizing that transformation, we obtain
a group of six invariance relations. Furthermore, the invariance
group composed of these six relations is shown to be isomorphic
to the symmetric group $S_3$.

In Section 4, we study terminating ${}_3F_2(4)$ series relations that 
follow from our terminating ${}_4F_3(4)$ series relations in Section 3.
In particular, we look at the two different families of 
terminating ${}_3F_2(4)$ series given by
\begin{equation}
\label{Ser3F21}
{}_3F_2 \left( \left. 
{\displaystyle -n,\frac{a}{2},\frac{a+1}{2}
\atop \displaystyle 1+a-c,c}\right| 
4\right)\\
\end{equation}
and
\begin{equation}
\label{Ser3F22}
{}_4F_3 \left( \left. 
{\displaystyle -n,\frac{a}{2},\frac{a+1}{2}
\atop \displaystyle a,c}\right| 
4\right).\\
\end{equation}
The first of these two families, under a suitable
normalization, is shown to again have an invariance
group isomorphic to the symmetric group $S_3$, while 
we do not have an $S_3$-symmetry in the second one of
these families. We still have two nontrivial
relations for the family (\ref{Ser3F22}), and
one of those two relations is relevant to the works
of Chu \cite{Chu1} and Chen and Chu \cite{ChenChu1}.
In fact, by taking certain limits of that 
relation, we can obtain some of the formulas found by
Chu in \cite{Chu1} and Chen and Chu in \cite{ChenChu1}.
We further obtain formulas for the sums of the first $n+1$ terms 
of certain divergent ${}_3F_2(4)$ hypergeometric series.

By series reversal, we explore in Section 5 corresponding 
relations among terminating ${}_4F_3(1/4)$ series, and 
the corresponding invariance group isomorphic to $S_3$.

Finally, in Section 6, we study, as consequence of our relations in
Section 5, the different types of relations among 
terminating ${}_3F_2(1/4)$
hypergeometric series which fall under two families. We also
give a formula for the sum of the first $n+1$ terms of a certain
nonterminating ${}_3F_2(1/4)$ series.

\section{Preliminaries}

The hypergeometric series of type ${}_{p}F_q$ is 
the power series in $z$ defined by
\begin{equation}
\label{hsd}
{}_{p}F_q \left( \left.{\displaystyle a_1,a_2,\ldots,a_{p}
\atop \displaystyle b_1,b_2,\ldots,b_q}\right| z\right) =
\sum_{n=0}^{\infty} \frac{(a_1)_n(a_2)_n \cdots
(a_{p})_n}{n!(b_1)_n(b_2)_n \cdots (b_q)_n}z^n,
\end{equation}
where $p$ and $q$ are nonnegative integers, $a_1,a_2,\ldots,a_{p},
b_1,b_2,\ldots,b_q, z\in \mathbb{C}$, and the rising factorial
$(a)_n$ is given by
\begin{equation*}
(a)_n=\left\{ \begin{array}{ll}
a(a+1)\cdots(a+n-1), & n>0,\\
1, & n=0.
\end{array} \right.
\end{equation*}

In this paper we will consider the case where $p=q+1$.
The series of type ${}_{q+1}F_q$ converges absolutely if $|z|<1$ or if
$|z|=1$ and $\textrm{Re}(\sum_{i=1}^qb_i-\sum_{i=1}^{q+1}a_i)>0$ (see
\cite[p.\ 8]{Ba}). We assume that no denominator parameter
$b_1,b_2,\ldots,b_q$ is a negative integer or zero. When a numerator
parameter $a_1,a_2,\ldots,a_{q+1}$ is a negative integer or zero, the
series has only finitely many nonzero terms and is said to 
{\it terminate}.

If $z=1$, we say that the series is of {\it unit argument} and of type
${}_{q+1}F_q(1)$. When $\sum_{i=1}^qb_i-\sum_{i=1}^{q+1}a_i=1$, the
series is called {\it Saalsch\"utzian}. If
$1+a_1=b_1+a_2=\cdots=b_q+a_{q+1}$, the series is called 
{\it well-poised}.
A well-poised series that satisfies $a_2=1+\frac{1}{2}a_1$ is called
{\it very-well-poised}.

We will use the classical Chu--Vandermonde formula
(see \cite[Section 1.3]{Ba}), which gives us the sum
of a terminating ${}_2F_1(1)$ series:
\begin{equation}
\label{chv}
{}_2F_1 \left( \left. 
{\displaystyle -n,a
\atop \displaystyle b}\right| 
1\right)
=\frac{(b-a)_n}{(b)_n}.
\end{equation}

If a hypergeometric series terminates, we can reverse its
order of summation. Using the readily verified identity
\begin{equation}
\label{nk}
(a)_{n-k}=\frac{(-1)^k (a)_n}{(1-a-n)_k},
\end{equation}
we can derive the following well-known 
``summation-reversal" formula:
\begin{eqnarray}
\label{rs}
&&{}_{p+1}F_q \left( \left. 
{\displaystyle -n,a_1,a_2,\ldots,a_p
\atop \displaystyle b_1,b_2,\ldots,b_q}\right| 
x\right)\\
&&=\frac{(a_1)_n(a_2)_n\cdots (a_p)_n(-x)^n}
{(b_1)_n(b_2)_n\cdots (b_q)_n}\nonumber\\
&&\times
{}_{q+1}F_p \left( \left. 
{\displaystyle -n,1-b_1-n,1-b_2-n,\ldots,1-b_q-n
\atop \displaystyle 1-a_1-n,1-a_2-n,\ldots,1-a_p-n}\right| 
\frac{(-1)^{p+q}}{x}\right).\nonumber
\end{eqnarray}
We will use Equation (\ref{rs}) in Sections 3 and 5
when expressing terminating ${}_4F_3(1/4)$
series in terms of
terminating ${}_4F_3(4)$ series and vice versa.

\section{Relations for the terminating
${}_4F_3(4)$ series}

In this section, we derive our relations for the
terminating ${}_4F_3(4)$ 
hypergeometric series and examine the structure
of those relations. The relations developed in this section
form the foundation for the rest of the paper.

We begin with a general
proposition that expresses a certain terminating
hypergeometric series as a sum of 
terminating hypergeometric series
of lower order:

\begin{Proposition}
\label{3P1}
If $n$ is a nonnegative integer, the following general identity holds:
\begin{eqnarray}
\label{1e3P1}
&&{}_{p+3}F_{q+3} \left( \left. 
{\displaystyle -n,\frac{a}{2},\frac{a+1}{2},a_1,a_2,\ldots,a_p
\atop \displaystyle a,1+a-c,c,b_1,b_2,\ldots,b_q}\right| 
x\right) \\
&&=\sum_{m=0}^n \left(
\frac{(-n)_m(a_1)_m(a_2)_m\cdots(a_p)_m\left(\frac{x}{4}\right)^m}
{m!(1+a-c)_m(b_1)_m(b_2)m\cdots(b_q)_m}\right.\nonumber\\
&&\times \left. 
{}_{p+1}F_{q+1} \left( \left. 
{\displaystyle -n+m,a_1+m,a_2+m,\ldots,a_p+m
\atop \displaystyle c,b_1+m,b_2+m,\ldots,b_q+m}\right| 
\frac{x}{4}\right) \right).\nonumber
\end{eqnarray} 
\end{Proposition}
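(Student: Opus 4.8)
The plan is to prove the identity by expanding both sides as (terminating) power series and comparing the coefficient of each power of $x$. Because of the factor $(-n)$, every series involved is a finite sum, so all the manipulations are purely algebraic rearrangements of finite sums of products of Pochhammer symbols.

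First I would rewrite the half-integer numerator pair $\frac{a}{2},\frac{a+1}{2}$ on the left-hand side using the duplication formula $\left(\frac{a}{2}\right)_k\left(\frac{a+1}{2}\right)_k = 4^{-k}(a)_{2k}$, together with the denominator factor $(a)_k$ and the elementary splitting $(a)_{2k}=(a)_k(a+k)_k$. This turns the $k$th term of the left side into
\[
\frac{(-n)_k (a+k)_k (a_1)_k\cdots(a_p)_k}{k!\,(1+a-c)_k (c)_k (b_1)_k\cdots(b_q)_k}\left(\frac{x}{4}\right)^k,
\]
so the argument is rescaled from $x$ to $x/4$ and the troublesome half-integer parameters collapse into the single scalar $(a+k)_k/[\,k!\,(1+a-c)_k(c)_k\,]$.

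Next I would expand the right-hand side as a double sum over the outer index $m$ and the summation index $j$ of the inner ${}_{p+1}F_{q+1}$, and collapse the Pochhammer products using the telescoping relation $(\alpha)_m(\alpha+m)_j=(\alpha)_{m+j}$ applied to $-n$, to each $a_i$, and to each $b_i$. Reindexing by $k=m+j$ and collecting terms of fixed $k$, the common factor $\frac{(-n)_k (a_1)_k\cdots(a_p)_k}{(b_1)_k\cdots(b_q)_k}(x/4)^k$ factors out, and matching with the rewritten left side reduces the whole proposition to the single scalar identity
\[
\frac{(a+k)_k}{k!\,(1+a-c)_k(c)_k} = \sum_{m=0}^{k}\frac{1}{m!\,(k-m)!\,(1+a-c)_m(c)_{k-m}}.
\]

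The crux, and the step I expect to be the main obstacle, is evaluating this inner sum in closed form; everything else is bookkeeping. I would recast it as a terminating ${}_2F_1(1)$: using identity (\ref{nk}) in the forms $\frac{k!}{(k-m)!}=(-1)^m(-k)_m$ and $(c)_{k-m}=(-1)^m(c)_k/(1-c-k)_m$, the two sign factors cancel and the sum becomes $\frac{1}{k!\,(c)_k}\,{}_2F_1(-k,\,1-c-k;\,1+a-c;\,1)$. Applying the Chu--Vandermonde formula (\ref{chv}), with $b-a=(1+a-c)-(1-c-k)=a+k$, evaluates this to $(a+k)_k/[\,k!\,(1+a-c)_k(c)_k\,]$, matching the left side and completing the proof. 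The only remaining care is with the summation ranges: the inner series terminates at $j=n-m$ and the outer sum at $m=n$, so $k=m+j$ runs over $0,\dots,n$ and, for fixed $k$, $m$ runs over $0,\dots,k$, exactly consistent with the termination of the original ${}_{p+3}F_{q+3}$ series.
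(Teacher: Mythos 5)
Your proposal is correct and is essentially the paper's own argument run in the opposite direction: both hinge on the duplication identity $\left(\tfrac{a}{2}\right)_k\left(\tfrac{a+1}{2}\right)_k4^k=(a)_{2k}$, the Chu--Vandermonde evaluation of ${}_2F_1(-k,1-c-k;1+a-c;1)$, and the reindexing $k=m+t$ of the resulting double sum. The paper substitutes the ${}_2F_1(1)$ into the left-hand side and rearranges outward to produce the right-hand side, whereas you collapse the right-hand side and meet in the middle at the same scalar identity; the content is identical.
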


\begin{proof}
By the Chu--Vandermonde formula, we have
\begin{eqnarray}
\label{1ep3P1}
&&{}_2F_1 \left( \left. 
{\displaystyle -n,1-c-n
\atop \displaystyle 1+a-c}\right| 
1\right)
=\frac{(a+n)_n}{(1+a-c)_n}\\
&&=\frac{(a)_{2n}}{(a)_n(1+a-c)_n}
=\frac{\left(\frac{a}{2}\right)_n\left(\frac{a+1}{2}\right)_n4^n}
{(a)_n(1+a-c)_n}.\nonumber
\end{eqnarray}
Using this in the summation expansion of
the left-hand side of (\ref{1e3P1}), we obtain
\begin{eqnarray}
\label{2ep3P1}
&&{}_{p+3}F_{q+3} \left( \left. 
{\displaystyle -n,\frac{a}{2},\frac{a+1}{2},a_1,a_2,\ldots,a_p
\atop \displaystyle a,1+a-c,c,b_1,b_2,\ldots,b_q}\right| 
x\right) \\
&&=\sum_{k=0}^n \left(
\frac{(-n)_k(a_1)_k(a_2)_k\cdots(a_p)_k\left(\frac{x}{4}\right)^k}
{k!(c)_k(b_1)_k(b_2)_k\cdots(b_q)_k}
{}_2F_1 \left( \left. 
{\displaystyle -k,1-c-k
\atop \displaystyle 1+a-c}\right| 
1\right)
\right)\nonumber\\
&&=\sum_{k=0}^n \left(
\frac{(-n)_k(a_1)_k(a_2)_k\cdots(a_p)_k\left(\frac{x}{4}\right)^k}
{k!(c)_k(b_1)_k(b_2)_k\cdots(b_q)_k}
\sum_{m=0}^k
\frac{(-k)_m(1-c-k)_m}{m!(1+a-c)_m}
\right)\nonumber\\
&&=\sum_{m=0}^n\sum_{k=m}^n 
\frac{(-n)_k(-1)^m(1-c-k)_m(a_1)_k(a_2)_k\cdots(a_p)_k\left(\frac{x}{4}\right)^k}
{m!(k-m)!(1+a-c)_m(c)_k(b_1)_k(b_2)_k\cdots(b_q)_k}\nonumber\\
&&=\sum_{m=0}^n\sum_{t=0}^{n-m} 
\frac{(-n)_{m+t}(a_1)_{m+t}(a_2)_{m+t}\cdots(a_p)_{m+t}\left(\frac{x}{4}\right)^{m+t}}
{m!t!(1+a-c)_m(c)_t(b_1)_{m+t}(b_2)_{m+t}\cdots(b_q)_{m+t}},\nonumber
\end{eqnarray}
where in our simplifications we have used
\begin{equation*}
\frac{(-k)_m}{k!m!}=\frac{(-1)^m}{m!(k-m)!}
\end{equation*}
and, after the change in index $k=m+t$,
\begin{eqnarray*}
\frac{(1-c-k)_m}{(c)_k}=\frac{(1-c-m-t)_m}{(c)_{m+t}}\\
=\frac{(-1)^m(c+t)_m}{(c)_{m+t}}
=\frac{(-1)^m}{(c)_t}.
\end{eqnarray*}

From here,
\begin{eqnarray}
\label{4ep3P1}
&&{}_{p+3}F_{q+3} \left( \left. 
{\displaystyle -n,\frac{a}{2},\frac{a+1}{2},a_1,a_2,\ldots,a_p
\atop \displaystyle a,1+a-c,c,b_1,b_2,\ldots,b_q}\right| 
x\right) \\
&&=\sum_{m=0}^n\left(
\frac{(-n)_{m}(a_1)_{m}(a_2)_{m}\cdots(a_p)_{m}\left(\frac{x}{4}\right)^{m}}
{m!(1+a-c)_m(b_1)_{m}(b_2)_{m}\cdots(b_q)_{m}}\right.\nonumber\\
&&\times\left.\sum_{t=0}^{n-m} 
\frac{(-n+m)_{t}(a_1+m)_{t}(a_2+m)_{t}\cdots(a_p+m)_{t}\left(\frac{x}{4}\right)^{t}}
{t!(c)_t(b_1+m)_{t}(b_2+m)_{t}\cdots(b_q+m)_{t}}\right).\nonumber
\end{eqnarray}
The right-hand side above is equal to the right-hand side of
(\ref{1e3P1}) and the proof is complete. 
\end{proof}

Proposition \ref{3P1} above is of similar nature to 
\cite[Eqs.\ (4.3.1) and (4.3.6)]{Ba}. We employ this 
proposition to derive a relation between a terminating 
${}_4F_3(4)$ series and a terminating ${}_4F_3(1/4)$ series:

\begin{Proposition}
\label{3P2}
If $n$ is a nonnegative integer, the following relation holds:
\begin{eqnarray}
\label{1e3P2}
&&{}_4F_3 \left( \left. 
{\displaystyle -n,\frac{a}{2},\frac{a+1}{2},b
\atop \displaystyle a,1+a-c,c}\right| 
4\right)\\
&&=\frac{(c-b)_n}{(c)_n}
{}_4F_3 \left( \left. 
{\displaystyle -n,1-c-n,b,1+b-c
\atop \displaystyle 1+a-c,\frac{1+b-c-n}{2},\frac{2+b-c-n}{2}}\right| 
\frac{1}{4}\right).\nonumber
\end{eqnarray}  
\end{Proposition}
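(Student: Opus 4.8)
The plan is to specialize the general reduction in Proposition \ref{3P1} and then evaluate the inner series by Chu--Vandermonde. Taking $p=1$, $q=0$, $a_1=b$, and $x=4$ in (\ref{1e3P1}) (so that $x/4=1$) collapses the left-hand side of (\ref{1e3P2}) into the single sum
\begin{equation*}
{}_4F_3 \left( \left. {\displaystyle -n,\frac{a}{2},\frac{a+1}{2},b \atop \displaystyle a,1+a-c,c}\right| 4\right)
=\sum_{m=0}^n \frac{(-n)_m (b)_m}{m!\,(1+a-c)_m}\,
{}_2F_1 \left( \left. {\displaystyle -n+m,b+m \atop \displaystyle c}\right| 1\right).
\end{equation*}
Each inner ${}_2F_1$ terminates, so (\ref{chv}) with upper index $n-m$ evaluates it as $(c-b-m)_{n-m}/(c)_{n-m}$. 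This already realizes the left-hand side as a single sum over $m=0,\dots,n$, and the remaining task is to match it against the normalized ${}_4F_3(1/4)$ on the right of (\ref{1e3P2}), whose top entry $-n$ likewise forces termination at $m=n$.

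I would compare the two sums term by term. Both summands share the factor $(-n)_m (b)_m/(m!\,(1+a-c)_m)$, so after cancelling it the claim reduces to
\begin{equation*}
\frac{(c-b-m)_{n-m}}{(c)_{n-m}}
=\frac{(c-b)_n}{(c)_n}\cdot
\frac{(1-c-n)_m (1+b-c)_m (1/4)^m}
{\left(\tfrac{1+b-c-n}{2}\right)_m\left(\tfrac{2+b-c-n}{2}\right)_m}.
\end{equation*}
Writing $d=c-b$, using (\ref{nk}) to replace $1/(c)_{n-m}$ by $(-1)^m(1-c-n)_m/(c)_n$, and applying the duplication formula $\left(\tfrac{1-d-n}{2}\right)_m\left(\tfrac{2-d-n}{2}\right)_m=(1-d-n)_{2m}/4^m$ to the two half-integer symbols reduces the display, after the common factor $(1-c-n)_m/(c)_n$ and the powers of $4$ cancel, to the purely multiplicative Pochhammer identity
\begin{equation*}
(-1)^m (d-m)_{n-m}=(d)_n\,\frac{(1-d)_m}{(1-d-n)_{2m}}.
\end{equation*}

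To establish this last identity I would apply (\ref{nk}) once more, in the form $(d-m)_{n-m}=(-1)^m (d-m)_n/(1-d+m-n)_m$, split $(1-d-n)_{2m}=(1-d-n)_m(1-d-n+m)_m$ with $(1-d-n+m)_m=(1-d+m-n)_m$, and use the elementary reflections $(d-m)_m=(-1)^m(1-d)_m$ and $(d+n-m)_m=(-1)^m(1-d-n)_m$ (each obtained by pulling $-1$ out of every factor). Everything then reduces to the ratio identity $(d-m)_n/(d)_n=(1-d)_m/(1-d-n)_m$, which is immediate since both sides equal $(d-m)_m/(d+n-m)_m$.

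The \emph{main obstacle}, and the only delicate point, is the reconciliation in the second paragraph: the Chu--Vandermonde output $(c-b-m)_{n-m}/(c)_{n-m}$ carries a summation-dependent shift $c-b-m$ in its base together with the variable upper index $n-m$, whereas the target displays fixed bases alongside the contiguous half-integer denominator parameters $\tfrac{1+b-c-n}{2}$ and $\tfrac{2+b-c-n}{2}$. Bridging the two shapes is exactly what forces the joint use of the duplication formula (to produce the $4^m$ and the paired half-integer symbols) and the reversal identity (\ref{nk}). Since all quantities are rational in the parameters, once the problem is distilled to the multiplicative identity above it holds for all admissible $a,b,c$, and the verification is routine.
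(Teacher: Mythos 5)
Your proposal is correct and follows essentially the same route as the paper: specialize Proposition \ref{3P1} with $p=1$, $q=0$, $a_1=b$, $x=4$, evaluate the inner ${}_2F_1(1)$ by Chu--Vandermonde, and then convert $(c-b-m)_{n-m}/(c)_{n-m}$ into the $m$-th term of the target ${}_4F_3(1/4)$ via Pochhammer identities. The only difference is cosmetic: you isolate the required Pochhammer identity and verify it with (\ref{nk}), the duplication formula, and reflections, whereas the paper reaches the same expression by a direct chain of rewritings in (\ref{2ep3P2}); both computations check out.
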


\begin{proof}
Letting $p=1$, $q=0$, $a_1=b$, and $x=4$ in Proposition \ref{3P1},
we obtain
\begin{eqnarray}
\label{1ep3P2}
&&{}_{4}F_{3} \left( \left. 
{\displaystyle -n,\frac{a}{2},\frac{a+1}{2},b
\atop \displaystyle a,1+a-c,c}\right| 
4\right) \nonumber\\
&&=\sum_{m=0}^n \left(
\frac{(-n)_m(b)_m}
{m!(1+a-c)_m}
{}_{2}F_{1} \left( \left. 
{\displaystyle -n+m,b+m
\atop \displaystyle c}\right| 
1\right) \right).
\end{eqnarray} 

Summing the ${}_2F_1$ series in (\ref{1ep3P2}) by the
Chu--Vandermonde formula and simplifying gives
\begin{eqnarray}
\label{2ep3P2}
&&{}_{2}F_{1} \left( \left. 
{\displaystyle -n+m,b+m
\atop \displaystyle c}\right| 
1\right) 
=\frac{(c-b-m)_{n-m}}{(c)_{n-m}}\nonumber\\
&&=\frac{(-1)^n(1+b-c-n+2m)_{n-m}(1-c-n)_m}
{(c)_n}\nonumber\\
&&=\frac{(-1)^n(1+b-c-n)_{n+m}(1-c-n)_m}
{(c)_n(1+b-c-n)_{2m}}\nonumber\\
&&=\frac{(c-b)_n(1+b-c)_m(1-c-n)_m}
{(c)_n\left(\frac{1+b-c-n}{2}\right)_{m}\left(\frac{2+b-c-n}{2}\right)_{m}4^m}.
\end{eqnarray}

Combining (\ref{1ep3P2}) and (\ref{2ep3P2})
yields the result.
\end{proof}

Reversing the order of summation in the terminating
${}_4F_3(1/4)$ series on the right-hand side
of (\ref{1e3P2}) gives a relation between two terminating
${}_4F_3(4)$ series:

\begin{Proposition}
\label{3P3}
If $n$ is a nonnegative integer, 
the following relation between two 
terminating ${}_4F_3(4)$ series holds:
\begin{eqnarray}
\label{1e3P3}
&&{}_4F_3 \left( \left. 
{\displaystyle -n,\frac{a}{2},\frac{a+1}{2},b
\atop \displaystyle a,1+a-c,c}\right| 
4\right)\\
&&=\frac{(-1)^n(b)_n}{(1+a-c)_n}
{}_4F_3 \left( \left. 
{\displaystyle -n,\frac{c-b-n}{2},\frac{c-b-n+1}{2},c-a-n
\atop \displaystyle c-b-n,1-b-n,c}\right| 
4\right).\nonumber
\end{eqnarray}  
\end{Proposition}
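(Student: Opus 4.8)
The plan is to take the relation already established in Proposition \ref{3P2} as the starting point and to reverse the order of summation of the terminating ${}_4F_3(1/4)$ series appearing on the right-hand side of (\ref{1e3P2}) by means of the summation-reversal formula (\ref{rs}). Since that series is a terminating ${}_4F_3$, I would apply (\ref{rs}) with $p=q=3$, reading off the upper parameters $a_1=1-c-n$, $a_2=b$, $a_3=1+b-c$, the lower parameters $b_1=1+a-c$, $b_2=\frac{1+b-c-n}{2}$, $b_3=\frac{2+b-c-n}{2}$, and $x=\frac14$. Because $(-1)^{p+q}=(-1)^6=1$ and $1/x=4$, the reversed series is automatically a terminating ${}_4F_3(4)$ series, which is exactly the type appearing on the right-hand side of (\ref{1e3P3}).

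First I would verify that the parameters produced by (\ref{rs}) agree with those in the target identity. The new upper parameters are $-n$, $1-b_1-n=c-a-n$, $1-b_2-n=\frac{c-b-n+1}{2}$, and $1-b_3-n=\frac{c-b-n}{2}$, while the new lower parameters are $1-a_1-n=c$, $1-a_2-n=1-b-n$, and $1-a_3-n=c-b-n$. These agree (up to reordering) with the upper and lower parameters of the ${}_4F_3(4)$ series on the right-hand side of (\ref{1e3P3}), so the structure of the identity is correct and only the scalar prefactor remains to be reconciled.

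The main computational step is simplifying the product of the prefactor $(c-b)_n/(c)_n$ from (\ref{1e3P2}) with the reversal prefactor
\[
\frac{(1-c-n)_n(b)_n(1+b-c)_n(-1/4)^n}
{(1+a-c)_n\left(\frac{1+b-c-n}{2}\right)_n\left(\frac{2+b-c-n}{2}\right)_n}
\]
coming from (\ref{rs}). To handle this I would use the duplication identity $\left(\frac{z}{2}\right)_n\left(\frac{z+1}{2}\right)_n=(z)_{2n}/4^n$ (as already used in (\ref{1ep3P1})) with $z=1+b-c-n$ to collapse the two half-shifted Pochhammer symbols in the denominator into $(1+b-c-n)_{2n}/4^n$; the reflection identity $(a)_n=(-1)^n(1-a-n)_n$, which follows from (\ref{nk}) by setting $k=n$, to rewrite $(1-c-n)_n=(-1)^n(c)_n$; and the splitting $(1+b-c-n)_{2n}=(1+b-c-n)_n(1+b-c)_n$ together with $(1+b-c-n)_n=(-1)^n(c-b)_n$. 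After these substitutions the factors $(c)_n$, $(c-b)_n$, and $(1+b-c)_n$ all cancel, the stray factor $(-1/4)^n$ combines with the $4^n$ from the duplication identity, and the remaining powers of $-1$ combine to leave exactly $(-1)^n(b)_n/(1+a-c)_n$, which is the prefactor in (\ref{1e3P3}).

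The bookkeeping of the signs and the powers of $4$ in the prefactor is the only place where an error is at all likely, so that is the step I would carry out most carefully; the parameter matching and the invocation of (\ref{rs}) are entirely routine once the identification $p=q=3$ is made.
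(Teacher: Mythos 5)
Your proposal is correct and follows exactly the paper's own proof: reverse the summation of the ${}_4F_3(1/4)$ series in Proposition \ref{3P2} via (\ref{rs}) with $p=q=3$, match the parameters, and collapse the prefactor using the duplication identity $\left(\frac{z}{2}\right)_n\left(\frac{z+1}{2}\right)_n=(z)_{2n}/4^n$ with $z=1+b-c-n$ together with the reflection $(1-c-n)_n=(-1)^n(c)_n$. The simplification you outline reproduces the paper's intermediate expression $\frac{(c-b)_n(b)_n(1+b-c)_n}{(1+a-c)_n(1+b-c-n)_{2n}}$ and the final prefactor $\frac{(-1)^n(b)_n}{(1+a-c)_n}$, so nothing further is needed.
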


\begin{proof}
In Proposition \ref{3P2}, we reverse the order of summation 
in the ${}_4F_3(1/4)$ series on the right-hand side 
according to Equation
(\ref{rs}). The right-hand side 
in (\ref{1e3P2}) thus becomes
\begin{eqnarray}
\label{1ep3P3}
&&\frac{(c-b)_n}{(c)_n}
{}_4F_3 \left( \left. 
{\displaystyle -n,1-c-n,b,1+b-c
\atop \displaystyle 1+a-c,\frac{1+b-c-n}{2},\frac{2+b-c-n}{2}}\right| 
\frac{1}{4}\right)\\
&&=\frac{(c-b)_n(1-c-n)_n(b)_n(1+b-c)_n\left(-\frac{1}{4}\right)_n}
{(c)_n(1+a-c)_n\left(\frac{1+b-c-n}{2}\right)_n\left(\frac{2+b-c-n}{2}\right)_n}\nonumber\\
&&\times
{}_4F_3 \left( \left. 
{\displaystyle -n,\frac{c-b-n}{2},\frac{c-b-n+1}{2},c-a-n
\atop \displaystyle c-b-n,1-b-n,c}\right| 
4\right)\nonumber\\
&&=\frac{(c-b)_n(b)_n(1+b-c)_n}
{(1+a-c)_n(1+b-c-n)_{2n}}\nonumber\\
&&\times
{}_4F_3 \left( \left. 
{\displaystyle -n,\frac{c-b-n}{2},\frac{c-b-n+1}{2},c-a-n
\atop \displaystyle c-b-n,1-b-n,c}\right| 
4\right)\nonumber\\
&&=\frac{(-1)^n(b)_n}{(1+a-c)_n}
{}_4F_3 \left( \left. 
{\displaystyle -n,\frac{c-b-n}{2},\frac{c-b-n+1}{2},c-a-n
\atop \displaystyle c-b-n,1-b-n,c}\right| 
4\right)\nonumber
\end{eqnarray}
and the result follows.
\end{proof}

We define now
\begin{equation}
\label{Tdef}
T_n(a,b,c)
=(1+a-c)_n(c)_n\,
{}_4F_3 \left( \left. 
{\displaystyle -n,\frac{a}{2},\frac{a+1}{2},b
\atop \displaystyle a,1+a-c,c}\right| 
4\right).\\
\end{equation}

The function $T_n$ defined above has the trivial invariance
\begin{equation}
\label{Ti1}
T_n(a,b,c)=T_n(a,b,1+a-c).
\end{equation}

Proposition \ref{3P3} gives us a notrivial invariance for $T_n$:
\begin{equation}
\label{Ti2}
T_n(a,b,c)=T_n(c-b-n,c-a-n,c).
\end{equation}

The invariances (\ref{Ti1}) and (\ref{Ti2}) generate
an invariance group $G$ for the function $T_n$.
The next
theorem lists all six 
resulting relations in the invariance group $G$ and
describes $G$ as isomorphic
to the symmetric group $S_3$:

\begin{Theorem}
\label{3T1}
Let $G$ be the invariance group for the function
$T_n(a,b,c)$ generated by 
(\ref{Ti1}) and (\ref{Ti2}) above. Then $G$ is isomorphic
to the symmetric group $S_3$ of order 6. Furthermore,
the resulting six invariances for $T_n(a,b,c)$ are given by:
\begin{eqnarray}
\label{TI1}
&&T_n(a,b,c)=T_n(a,b,c),\\
\label{TI2}
&&T_n(a,b,c)=T_n(a,b,1+a-c),\\
\label{TI3}
&&T_n(a,b,c)=T_n(c-b-n,c-a-n,c),\\
\label{TI4}
&&T_n(a,b,c)=T_n(c-b-n,c-a-n,1-b-n),\\
\label{TI5}
&&T_n(a,b,c)=T_n(1+a-b-c-n,1-c-n,1+a-c),\\
\label{TI6}
&&T_n(a,b,c)=T_n(1+a-b-c-n,1-c-n,1-b-n).
\end{eqnarray}
\end{Theorem}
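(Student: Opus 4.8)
The plan is to recast the two generating invariances as substitutions on the ordered parameter triple $(a,b,c)$, with $n$ held fixed, and then to identify the abstract group they generate. Write $\sigma$ for the substitution $(a,b,c)\mapsto(a,b,1+a-c)$ coming from (\ref{Ti1}), and $\tau$ for the substitution $(a,b,c)\mapsto(c-b-n,c-a-n,c)$ coming from (\ref{Ti2}). Since $T_n$ is invariant under each generator, the invariance group $G$ is precisely the group of argument substitutions generated by $\sigma$ and $\tau$, and I would analyze it purely as an abstract group acting on the triple.

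The first step is to check that both generators are involutions. Substituting twice gives $\sigma^2(a,b,c)=(a,b,1+a-(1+a-c))=(a,b,c)$, and a similar direct substitution yields $\tau^2(a,b,c)=(a,b,c)$; hence $\sigma^2=\tau^2=\mathrm{id}$. The second and decisive step is to determine the order of the product. Setting $\rho:=\sigma\tau$ and composing, one obtains $\rho\colon(a,b,c)\mapsto(c-b-n,c-a-n,1-b-n)$; iterating gives $\rho^2\colon(a,b,c)\mapsto(1+a-b-c-n,1-c-n,1+a-c)$ and then $\rho^3(a,b,c)=(a,b,c)$, so $\rho$ has order exactly $3$ (it is visibly not the identity, already on the first coordinate).

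With these facts in hand the group theory is immediate. A group generated by two distinct involutions whose product has order $m$ is the dihedral group $D_m$ of order $2m$, so here $G=\langle\sigma,\tau\rangle$ satisfies the presentation $\langle\sigma,\tau\mid\sigma^2=\tau^2=(\sigma\tau)^3=\mathrm{id}\rangle$ and therefore $G\cong D_3\cong S_3$. That $|G|=6$, rather than something smaller, follows because $\sigma\notin\{\mathrm{id},\rho,\rho^2\}$, which one checks on the first coordinate. Finally, I would enumerate the six group elements $\mathrm{id}$, $\sigma$, $\tau$, $\rho=\sigma\tau$, $\rho^2=\tau\sigma$, and $\rho\sigma=\sigma\tau\sigma$, and read off their action on $(a,b,c)$ by direct substitution; this matches them, respectively, to the six invariances (\ref{TI1})--(\ref{TI6}). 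In particular $\rho$ produces (\ref{TI4}), $\rho^2$ produces (\ref{TI5}), and $\rho\sigma$ produces (\ref{TI6}).

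The only place anything can go wrong is the order-$3$ computation $(\sigma\tau)^3=\mathrm{id}$; everything else (the involution property, the dihedral classification, and the standard isomorphism $D_3\cong S_3$) is routine. Thus the main obstacle is simply carrying out the composition bookkeeping for $\rho$, $\rho^2$, and $\rho^3$ without sign or index slips, together with confirming that the six resulting substitutions are genuinely distinct so that $G$ attains its full order $6$.
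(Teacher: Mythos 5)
Your proof is correct: the composition bookkeeping checks out ($\sigma^2=\tau^2=(\sigma\tau)^3=\mathrm{id}$), and the six substitutions $\mathrm{id},\sigma,\tau,\rho,\rho^2,\rho\sigma$ do reproduce (\ref{TI1})--(\ref{TI6}) in the stated order. The computational content coincides with the paper's proof, which likewise obtains the six relations by composing the two generators in all possible ways. Where you differ is in how the abstract group is identified: the paper computes the orders of the six elements to be $1,2,2,3,3,2$ and appeals to the classification of groups of order $6$, ruling out $\mathbb{Z}_6$ by the absence of an element of order $6$; you instead verify the presentation $\langle\sigma,\tau\mid\sigma^2=\tau^2=(\sigma\tau)^3=\mathrm{id}\rangle$ and cite the fact that two distinct involutions whose product has order $3$ generate $D_3\cong S_3$. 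Your route has the small advantage of establishing directly that $|G|$ is exactly $6$ --- the paper implicitly takes for granted that the six listed relations are pairwise distinct and exhaust the group --- at the cost of invoking the dihedral presentation; the paper's version instead leans on the order-$6$ classification. Either way, the only nontrivial step is the verification that $\rho,\rho^2,\rho^3$ act as claimed, and you have carried that out correctly.
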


\begin{proof}
By combining (\ref{Ti1}) and (\ref{Ti2}) in all 
possible ways, it is straight-forward to check that we obtain
the relations (\ref{TI1})--(\ref{TI6}).

It is well-known that
there are two groups of order 6: the cyclic group 
$\mathbb{Z}_6$ and the symmetric group $S_3$.
We directly compute the orders of the relations 
(\ref{TI1})--(\ref{TI6}) in $G$ to be
1, 2, 2, 3, 3, and 2, respectively.
Based on the orders of these elements,
in particular the lack of an element of order 6,
the invariance group $G$ must be isomorphic to 
the symmetric group $S_3$.
\end{proof}

We next reparameterize the function $T_n$ to
emphasize the $S_3$-symmetry further:

\begin{Theorem}
\label{3T2}
Define the function $U_n$ by
\begin{equation}
\label{Udef}
U_n(x,y,z)
=T_n\left(x-y-z,\frac{1+3x-y-z-2n}{2},\frac{1+x+y-3z}{2}\right).
\end{equation}
Then $U_n$ is a symmetric function in its parameters
$x,y,z$, i.e. $U_n$ is invariant under any of the six possible permutations
of $x,y,z$. Furthermore, the six invariances of $U_n$ given by
\begin{eqnarray}
\label{UI1}
&&U_n(x,y,z)=U_n(x,y,z),\\
\label{UI2}
&&U_n(x,y,z)=U_n(x,z,y),\\
\label{UI3}
&&U_n(x,y,z)=U_n(y,x,z),\\
\label{UI4}
&&U_n(x,y,z)=U_n(y,z,x),\\
\label{UI5}
&&U_n(x,y,z)=U_n(z,x,y),\\
\label{UI6}
&&U_n(x,y,z)=U_n(z,y,x)
\end{eqnarray}
correspond to the invariances 
(\ref{TI1})--(\ref{TI6}), respectively,
of $T_n(a,b,c)$ given in Theorem \ref{3T1}.
\end{Theorem}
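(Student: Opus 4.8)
The plan is to treat the definition (\ref{Udef}) as an explicit invertible linear change of variables between the triples $(x,y,z)$ and $(a,b,c)$, and to show that permuting $x,y,z$ corresponds, under this substitution, precisely to applying the transformations (\ref{TI1})--(\ref{TI6}) to $(a,b,c)$. Since Theorem \ref{3T1} guarantees that $T_n$ is invariant under all six of those transformations, it will follow immediately that $U_n$ is unchanged under every permutation of its three arguments, that is, that $U_n$ is symmetric.

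First I would record the substitution read off from (\ref{Udef}), namely
\begin{equation*}
a = x-y-z, \qquad b = \frac{1+3x-y-z-2n}{2}, \qquad c = \frac{1+x+y-3z}{2}.
\end{equation*}
Then, for each of the six permutations $\pi$ of $(x,y,z)$, I would substitute the permuted variables into these three expressions to obtain a new triple $(a_\pi,b_\pi,c_\pi)$, and verify that it agrees with the image of $(a,b,c)$ under the corresponding transformation from Theorem \ref{3T1}. For example, the transposition $y\leftrightarrow z$ fixes both $a$ and $b$ while sending $c$ to $\frac{1+x+z-3y}{2}$, and a short computation shows $1+a-c=\frac{1+x+z-3y}{2}$; thus this transposition realizes exactly the invariance (\ref{TI2}). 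Similarly, the transposition $x\leftrightarrow y$ fixes $c$ and sends $(a,b)$ to $(c-b-n,\,c-a-n)$, realizing (\ref{TI3}); and the 3-cycles together with the remaining transposition are handled the same way and produce (\ref{TI4}), (\ref{TI5}), and (\ref{TI6}) in turn, establishing the claimed matching (\ref{UI1})--(\ref{UI6}) $\leftrightarrow$ (\ref{TI1})--(\ref{TI6}).

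Because the two transpositions $y\leftrightarrow z$ and $x\leftrightarrow y$ generate all of $S_3$, one could in principle verify only these two cases and invoke the fact that the substitution intertwines the $S_3$-action on $(x,y,z)$ with the action of the group $G$ on $(a,b,c)$; the remaining four correspondences would then follow by composition. However, since there are only six permutations, carrying out all six substitutions directly is just as quick and pins down the precise matching asserted in the statement. With these correspondences in hand, the invariance of $T_n$ under (\ref{TI1})--(\ref{TI6}) translates verbatim into invariance of $U_n$ under all six permutations of $x,y,z$, completing the proof.

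The main obstacle is purely one of careful algebraic bookkeeping: the denominators of $2$ and the shifts by $n$ appearing in $b$ and $c$ must be tracked accurately, so that the expressions $c-b-n$, $1-b-n$, and $1+a-b-c-n$ occurring in (\ref{TI3})--(\ref{TI6}) reduce to exactly the right linear combinations of $x,y,z$ after substitution. There is no conceptual difficulty, and no new identity is needed beyond Theorem \ref{3T1}; but the constant and $n$-dependent terms are precisely where an arithmetic slip would go unnoticed, so those are the computations I would double-check.
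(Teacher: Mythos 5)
Your proposal is correct and matches the paper's approach: the paper's proof is simply ``a direct check,'' and your plan of substituting each permutation of $(x,y,z)$ into the definition of $U_n$ and verifying that it reproduces the corresponding transformation (\ref{TI1})--(\ref{TI6}) of $(a,b,c)$ is exactly that check, carried out explicitly (and your sample computations for the transpositions $y\leftrightarrow z$ and $x\leftrightarrow y$ are accurate). The observation that two generating transpositions would suffice is a minor streamlining, not a different method.
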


\begin{proof}
The result 
in this theorem
follows from a direct check.
\end{proof}

\section{Relations for the terminating
${}_3F_2(4)$ series}

In this section, we study consequences of the relations 
from the previous section. We derive
as special cases relations between terminating
${}_3F_2(4)$ hypergeometric series.

If we let $b=a$ in Proposition \ref{3P3},
we obtain a relation between two 
terminating ${}_3F_2(4)$ series:

\begin{eqnarray}
\label{1e4c1}
&&{}_3F_2 \left( \left. 
{\displaystyle -n,\frac{a}{2},\frac{a+1}{2}
\atop \displaystyle 1+a-c,c}\right| 
4\right)\\
&&=\frac{(-1)^n(a)_n}{(1+a-c)_n}
{}_3F_2 \left( \left. 
{\displaystyle -n,\frac{c-a-n}{2},\frac{c-a-n+1}{2}
\atop \displaystyle 1-a-n,c}\right| 
4\right).\nonumber
\end{eqnarray}  

Let us define the function
\begin{equation}
\label{Ttdef}
\widetilde{T}_n(a,c)
=(1+a-c)_n(c)_n\,
{}_3F_2 \left( \left. 
{\displaystyle -n,\frac{a}{2},\frac{a+1}{2}
\atop \displaystyle 1+a-c,c}\right| 
4\right).
\end{equation}

A trivial relation for $\widetilde{T}_n$ is 
\begin{equation}
\label{Tti1}
\widetilde{T}_n(a,c)=\widetilde{T}_n(a,1+a-c).
\end{equation}

Furthermore, Equation (\ref{1e4c1}) gives the nontrivial relation
\begin{equation}
\label{Tti2}
\widetilde{R}_n(a,c)=\widetilde{R}_n(c-a-n,c).
\end{equation}

Just like the 
invariance group for the 
function $T_n(a,b,c)$ for the terminating
${}_4F_3(4)$ series, the invariance group for
$\widetilde{T}_n(a,c)$ is isomorphic to the symmetric
group $S_3$ as well. 
In fact, the invariance relations 
for $\widetilde{T}_n$
follow from the
invariance relations (\ref{TI1})--(\ref{TI6})
for $T$ upon setting $b=a$ in each 
of the latter relations.
Below is a list of the six invariances for 
$\widetilde{T}_n(a,c)$:
\begin{eqnarray}
\label{TtI1}
&&\widetilde{T}_n(a,c)=\widetilde{T}_n(a,c),\\
\label{TtI2}
&&\widetilde{T}_n(a,c)=\widetilde{T}_n(a,1+a-c),\\
\label{TtI3}
&&\widetilde{T}_n(a,c)=\widetilde{T}_n(c-a-n,c),\\
\label{TtI4}
&&\widetilde{T}_n(a,c)=\widetilde{T}_n(c-a-n,1-a-n),\\
\label{TtI5}
&&\widetilde{T}_n(a,c)=\widetilde{T}_n(1-c-n,1+a-c),\\
\label{TtI6}
&&\widetilde{T}_n(a,c)=\widetilde{T}_n(1-c-n,1-a-n).
\end{eqnarray}
Invariances (\ref{TtI1})--(\ref{TtI6})
correspond to invariances (\ref{TI1})--(\ref{TI6}),
respectively.

If we reparameterize $\widetilde{T}_n(a,c)$ by
\begin{equation}
\label{Utdef}
\widetilde{U}_n(x,y,z)
=\widetilde{T}_n\left(\frac{1+2x-y-z-2n}{3},\frac{2+x+y-2z-n}{3}\right),
\end{equation}
then $\widetilde{U}_n$ is invariant under all six permutations
of $x,y,z$. In fact, the invariances
\begin{eqnarray}
\label{UtI1}
&&\widetilde{U}_n(x,y,z)=\widetilde{U}_n(x,y,z),\\
\label{UtI2}
&&\widetilde{U}_n(x,y,z)=\widetilde{U}_n(x,z,y),\\
\label{UtI3}
&&\widetilde{U}_n(x,y,z)=\widetilde{U}_n(y,x,z),\\
\label{UtI4}
&&\widetilde{U}_n(x,y,z)=\widetilde{U}_n(y,z,x),\\
\label{UtI5}
&&\widetilde{U}_n(x,y,z)=\widetilde{U}_n(z,x,y),\\
\label{UtI6}
&&\widetilde{U}_n(x,y,z)=\widetilde{U}_n(z,y,x)
\end{eqnarray}
correspond to the invariances 
(\ref{TtI1})--(\ref{TtI6}), respectively,
of $\widetilde{T}_n(a,c)$.

We next let $b=1+a-c$ in Proposition \ref{3P3}.
We obtain the following relation between two 
terminating ${}_3F_2(4)$ series:
\begin{eqnarray}
\label{1e4c2}
&&{}_3F_2 \left( \left. 
{\displaystyle -n,\frac{a}{2},\frac{a+1}{2}
\atop \displaystyle a,c}\right| 
4\right)\\
&&=(-1)^n
{}_3F_2 \left( \left. 
{\displaystyle -n,\frac{2c-a-n-1}{2},\frac{2c-a-n}{2}
\atop \displaystyle 2c-a-n-1,c}\right| 
4\right).\nonumber
\end{eqnarray}  

We note that relation (\ref{1e4c2}) is different from relation 
(\ref{1e4c1}) as
the two ${}_3F_2(4)$ series 
in (\ref{1e4c2}) are different in type from the two
${}_3F_2(4)$ series in (\ref{1e4c1}).

If we define
\begin{equation}
\label{Qdef}
Q_n(a,c)
=(1+a-c)_n(c)_n\,
{}_3F_2 \left( \left. 
{\displaystyle -n,\frac{a}{2},\frac{a+1}{2}
\atop \displaystyle a,c}\right| 
4\right),
\end{equation}
then (\ref{1e4c2}) leads to
\begin{equation}
\label{Qinv}
Q_n(a,c)=Q_n(2c-a-n-1,c),
\end{equation}
which is a relation of order 2. The function
$Q_n(a,c)$ does not have any trivial relations besides
the identity, and thus the invariance group for $Q_n(a,c)$
is isomorphic to the symmetric group $S_2$ of order 2.
In fact, if we define
\begin{equation}
\label{Wdef}
W_n(x,y)=Q_n\left(x,\frac{1+x+y+n}{2}\right),
\end{equation}
the nontrivial relation (\ref{Qinv}) for $Q_n(a,c)$
can be written as 
\begin{equation}
\label{Winv}
W_n(x,y)=W_n(y,x).
\end{equation}

There is
one more 
nontrivial relation for the 
series
${}_3F_2 \left( \left. 
{\displaystyle -n,\frac{a}{2},\frac{a+1}{2}
\atop \displaystyle a,c}\right| 
4\right)$ 
that
can be obtained from the relations
(\ref{TI1})--(\ref{TI6}) in Theorem \ref{3T1}:
letting $b=1+a-c$ in (\ref{TI5}) (or in (\ref{TI6})), we have
\begin{eqnarray}
\label{2e4c2}
&&{}_3F_2 \left( \left. 
{\displaystyle -n,\frac{a}{2},\frac{a+1}{2}
\atop \displaystyle a,c}\right| 
4\right)\\
&&=\frac{(-1)^n(1+a-c)_n}{(c)_n}
{}_3F_2 \left( \left. 
{\displaystyle -\frac{n}{2},\frac{1-n}{2},1-c-n
\atop \displaystyle c-a-n,1+a-c}\right| 
4\right).\nonumber
\end{eqnarray}  

\begin{Remark}
\label{4R2}
Let $a=1$ in Equation (\ref{2e4c2}) above. 
We obtain the relation
\begin{eqnarray}
\label{1e4R2}
&&{}_2F_1 \left( \left. 
{\displaystyle -n,\frac{1}{2}
\atop \displaystyle c}\right| 
4\right)\\
&&=\frac{(-1)^n(2-c)_n}{(c)_n}
{}_3F_2 \left( \left. 
{\displaystyle -\frac{n}{2},\frac{1-n}{2},1-c-n
\atop \displaystyle c-n-1,2-c}\right| 
4\right).\nonumber
\end{eqnarray}  
The special case 
$c=1+\gamma+\lceil\frac{n}{2}\rceil$,
where $\gamma$ is an integer and 
$\lceil x \rceil$ denotes the smallest integer greater than
or equal to the real number $x$,
in 
the ${}_2F_1(4)$ series on the left-hand side of
(\ref{1e4R2}) coincides with the special
case $\alpha=0$ of
the function $\Omega_n(\alpha,\gamma)$ defined
in \cite[Eq.\ (1)]{Chu1}.
If we take the limit
as $c \to 1+\gamma+\lceil\frac{n}{2}\rceil$ in 
(\ref{1e4R2}), we can find the corresponding values
of $\Omega_n(0,\gamma)$.
For example, the specific
values of $\Omega_n(0,\gamma)$ given in
\cite{Chu1} for $\gamma=-2,-1,0,1,2$ can all
be obtained as limits of our relation (\ref{1e4R2})
as just described. 
\end{Remark}

\begin{Remark}
\label{4R1}
The special case 
$c=\gamma+a+\lceil\frac{n}{2}\rceil$,
where $\gamma$ is an integer,
in the ${}_3F_2(4)$ series on the left-hand side 
of Equation (\ref{2e4c2})
coincides with the special case $\alpha=0$ of
the function $\Omega_{\alpha,\gamma}^n(a)$ defined
in \cite[Eq.\ (1.3)]{ChenChu1}. In fact, by taking the limit
as $c \to \gamma+a+\lceil\frac{n}{2}\rceil$ in 
(\ref{2e4c2}), we can find the corresponding values
of $\Omega_{0,\gamma}^n(a)$.
For example, the specific
values of $\Omega_{0,\gamma}^n(a)$ given in
\cite{ChenChu1} for $\gamma=-2,-1,0,1,2$ can all
be obtained as limits of our relation (\ref{2e4c2})
as just described.
\end{Remark}

For the hypergeometric series 
${}_{p}F_q \left( \left.{\displaystyle a_1,a_2,\ldots,a_{p}
\atop \displaystyle b_1,b_2,\ldots,b_q}\right| z\right)$,
we let the expression
$\left[{}_{p}F_q \left( \left.{\displaystyle a_1,a_2,\ldots,a_{p}
\atop \displaystyle b_1,b_2,\ldots,b_q}\right| z\right)\right]_n$
denote the sum of the first $n+1$ terms of the series, i.e.
\begin{equation}
\label{parsum}
\left[{}_{p}F_q \left( \left.{\displaystyle a_1,a_2,\ldots,a_{p}
\atop \displaystyle b_1,b_2,\ldots,b_q}\right| z\right)\right]_n =
\sum_{k=0}^{n} \frac{(a_1)_k(a_2)_k \cdots
(a_{p})_k}{k!(b_1)_k(b_2)_k \cdots (b_q)_k}z^k.
\end{equation}

Now let $c=1+a+n$ in Proposition \ref{3P3}.
We obtain the 
following
curious result regarding the sum of the first 
$n+1$ terms of the divergent series
${}_3F_2 \left( \left. 
{\displaystyle \frac{a}{2},\frac{a+1}{2},b
\atop \displaystyle a,1+a+n}\right| 
4\right)$:
\begin{eqnarray}
\label{1e4P3}
&&\left[{}_3F_2 \left( \left. 
{\displaystyle \frac{a}{2},\frac{a+1}{2},b
\atop \displaystyle a,1+a+n}\right| 
4\right)\right]_n\\
&&=\frac{(b)_n}{n!}
{}_4F_3 \left( \left. 
{\displaystyle -n,\frac{1+a-b}{2},\frac{2+a-b}{2},1
\atop \displaystyle 1+a-b,1-b-n,1+a+n}\right| 
4\right).\nonumber
\end{eqnarray} 

Letting $c=1+a+n$ in (\ref{TI5}) (or in (\ref{TI6}))
and then combining the result with (\ref{1e4P3}),
we obtain a formula for 
the sum of the first $n+1$ terms of the divergent series
${}_3F_2 \left( \left. 
{\displaystyle -\frac{b}{2}-n,\frac{1-b}{2}-n,-a-2n
\atop \displaystyle -b-2n,1-b-n}\right| 
4\right)$:
\begin{eqnarray}
\label{2e4P4}
&&\left[{}_3F_2 \left( \left. 
{\displaystyle -\frac{b}{2}-n,\frac{1-b}{2}-n,-a-2n
\atop \displaystyle -b-2n,1-b-n}\right| 
4\right)\right]_n\\
&&=\frac{(-1)^n(1+a)_{2n}}{n!(1+a)_n}
{}_4F_3 \left( \left. 
{\displaystyle -n,\frac{1+a-b}{2},\frac{2+a-b}{2},1
\atop \displaystyle 1+a-b,1-b-n,1+a+n}\right| 
4\right).\nonumber
\end{eqnarray}  

\section{Relations for the terminating
${}_4F_3(1/4)$ series}

We explore relations for terminating ${}_4F_3(1/4)$ hypergeometric
series in this section and describe the structure of those
relations. The relations for the terminating  ${}_4F_3(1/4)$
series correspond to the series reversals of the relations for the
terminating ${}_4F_3(4)$ series in Section 3.

\begin{Proposition}
\label{5P1}
If $n$ is a nonnegative integer, 
the following relation between two 
terminating ${}_4F_3(1/4)$ series holds:
\begin{eqnarray}
\label{1e5P1}
&&{}_4F_3 \left( \left. 
{\displaystyle -n,a,a-c-n,c
\atop \displaystyle \frac{a-n}{2},\frac{1+a-n}{2},b}\right| 
\frac{1}{4}\right)\\
&&=\frac{(1+c-a)_n(b-c)_n}{(1-a)_n(b)_n}\nonumber\\
&&\times
{}_4F_3 \left( \left. 
{\displaystyle -n,1+c-b,1-b-n,c
\atop \displaystyle \frac{1+c-b-n}{2},\frac{2+c-b-n}{2},1+c-a}\right| 
\frac{1}{4}\right).\nonumber
\end{eqnarray}  
\end{Proposition}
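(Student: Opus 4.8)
The plan is to prove Proposition~\ref{5P1} by reversing the order of summation in each of the two ${}_4F_3(1/4)$ series, thereby reducing the asserted $1/4$-relation to one of the $4$-relations already established in Section~3; this is exactly the principle announced at the start of this section. Concretely, I would apply the summation-reversal formula (\ref{rs}) to each side of (\ref{1e5P1}). Since here $p=q=3$, we have $(-1)^{p+q}=1$, so the reversal sends each series of argument $1/4$ to a series of argument $4$.

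First I would reverse the left-hand side of (\ref{1e5P1}). Formula (\ref{rs}) produces a scalar prefactor times a ${}_4F_3(4)$ series whose upper parameters become $-n,\tfrac{2-a-n}{2},\tfrac{1-a-n}{2},1-b-n$ and whose lower parameters become $1-a-n,1+c-a,1-c-n$. The key recognition is that $\tfrac{1-a-n}{2}$ and $\tfrac{2-a-n}{2}$ are $\tfrac{\alpha}{2}$ and $\tfrac{\alpha+1}{2}$ with $\alpha=1-a-n$, so the reversed series belongs to the family (\ref{Ser}); comparing with (\ref{Tdef}) it equals $T_n(1-a-n,1-b-n,1-c-n)$ divided by its normalizing factor $(1+c-a)_n(1-c-n)_n$. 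Reversing the right-hand side of (\ref{1e5P1}) in the same manner, and again recognizing the half-argument pattern, I would identify that series as $T_n(b-c-n,a-c-n,1-c-n)$ divided by $(b)_n(1-c-n)_n$.

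Now the two reversed $4$-series are equal by Theorem~\ref{3T1}: applying the invariance (\ref{TI3}), namely $T_n(A,B,C)=T_n(C-B-n,C-A-n,C)$, with $(A,B,C)=(1-a-n,1-b-n,1-c-n)$ yields exactly $T_n(b-c-n,a-c-n,1-c-n)$. Hence the two $T_n$ factors cancel, and the entire identity collapses to a single scalar identity among the prefactors, which is where the actual work lies.

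The main obstacle is to verify that these prefactors combine to the constant $\frac{(1+c-a)_n(b-c)_n}{(1-a)_n(b)_n}$ in (\ref{1e5P1}). For this I would clear the half-integer Pochhammer symbols using the doubling identity $(\tfrac{\alpha}{2})_n(\tfrac{\alpha+1}{2})_n=(\alpha)_{2n}/4^n$ together with the split $(\alpha)_{2n}=(\alpha)_n(\alpha+n)_n$, and then convert the remaining factors $(a-c-n)_n$, $(a-n)_n$, $(1-b-n)_n$, and $(1+c-b-n)_n$ by means of the reflection $(\alpha)_n=(-1)^n(1-\alpha-n)_n$, which follows from (\ref{nk}) with $k=n$. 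After these substitutions the powers of $4$, the factors $(-1/4)^n$, and the common factor $(c)_n$ cancel between the two sides, and both sides reduce to $\frac{1}{(1-a)_n(b)_n}$, completing the verification.
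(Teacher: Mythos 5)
Your proposal is correct and follows essentially the same route as the paper: the paper's proof simply reverses the order of summation in the two ${}_4F_3(4)$ series of Proposition \ref{3P3} via (\ref{rs}) and relabels $1-a-n,1-b-n,1-c-n$ as $a,b,c$, which is exactly your computation read in the opposite direction (your invocation of (\ref{TI3}) is Proposition \ref{3P3} in normalized form). Your identification of the reversed series as members of the family (\ref{Ser}) and your prefactor verification via the doubling and reflection identities both check out.
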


\begin{proof}
Reverse the order of summation
in the two ${}_4F_3(4)$ series
in Proposition \ref{3P3}
according to (\ref{rs}), and then re-label
$1-a-n,1-b-n$, and $1-c-n$ with 
$a,b$, and $c$, respectively.
\end{proof}

Let
\begin{equation}
\label{Rdef}
R_n(a,b,c)
=(1-a)_n(b)_n\,
{}_4F_3 \left( \left. 
{\displaystyle -n,a,a-c-n,c
\atop \displaystyle \frac{a-n}{2},\frac{1+a-n}{2},b}\right| 
\frac{1}{4}\right).\\
\end{equation}

The function $R_n$ has the trivial invariance
\begin{equation}
\label{Ri1}
R_n(a,b,c)=R_n(a,b,a-c-n).
\end{equation}

Proposition \ref{5P1} gives us a notrivial invariance for $R_n$:
\begin{equation}
\label{Ri2}
R_n(a,b,c)=R_n(1+c-b,1+c-a,c).
\end{equation}

The invariance group for the function
$R_n(a,b,c)$ generated by (\ref{Ri1})
and (\ref{Ri2}) is, just like the invariance
group for $T_n(a,b,c)$, isomorphic to the symmetric group $S_3$
of order 6. The invariance relations for $R_n$ are given by
\begin{eqnarray}
\label{RI1}
&&R_n(a,b,c)=R_n(a,b,c),\\
\label{RI2}
&&R_n(a,b,c)=R_n(a,b,a-c-n),\\
\label{RI3}
&&R_n(a,b,c)=R_n(1+c-b,1+c-a,c),\\
\label{RI4}
&&R_n(a,b,c)=R_n(1+c-b,1+c-a,1-b-n),\\
\label{RI5}
&&R_n(a,b,c)=R_n(1+a-b-c-n,1-c-n,a-c-n),\\
\label{RI6}
&&R_n(a,b,c)=R_n(1+a-b-c-n,1-c-n,1-b-n),
\end{eqnarray}
and these 
invariance relations
correspond to the summation reversals of
(\ref{TI1})--(\ref{TI6}), respectively.

Next, we reparameterize $R_n$ by defining $V_n$
according to
\begin{equation}
\label{Vdef}
V_n(x,y,z)=R_n\left(
x-y-z,\frac{2+3x-y-z-n}{2},\frac{x+y-3z-n}{2}\right).
\end{equation}
Then $V_n(x,y,z)$ is invariant under all six permutations
of $x,y,z$. Furthermore, the invariances
\begin{eqnarray}
\label{VI1}
&&V_n(x,y,z)=V_n(x,y,z),\\
\label{VI2}
&&V_n(x,y,z)=V_n(x,z,y),\\
\label{VI3}
&&V_n(x,y,z)=V_n(y,x,z),\\
\label{VI4}
&&V_n(x,y,z)=V_n(y,z,x),\\
\label{VI5}
&&V_n(x,y,z)=V_n(z,x,y),\\
\label{VI6}
&&V_n(x,y,z)=V_n(z,y,x)
\end{eqnarray}
correspond to the invariances 
(\ref{RI1})--(\ref{RI6}), respectively,
of $R_n(a,b,c)$.

\section{Relations for the terminating
${}_3F_2(1/4)$ series}

In this final section, we derive some consequences of the relations for
the ${}_4F_3(1/4)$ hypergeometric series from the previous section.
We obtain relations for terminating ${}_3F_2(1/4)$ series. We note that
the relations for the terminating ${}_3F_2(1/4)$ series in this section
correspond to the series reversals of the relations for the
terminating ${}_3F_2(4)$ series in Section 4.

Let $b=a$ in Proposition \ref{5P1}. We obtain the following relation
between two terminating ${}_3F_2(1/4)$ series:
\begin{eqnarray}
\label{1e5S1}
&&{}_3F_2 \left( \left. 
{\displaystyle -n,a-c-n,c
\atop \displaystyle \frac{a-n}{2},\frac{1+a-n}{2}}\right| 
\frac{1}{4}\right)\\
&&=\frac{(1+c-a)_n(a-c)_n}{(1-a)_n(a)_n}\nonumber\\
&&\times
{}_3F_2 \left( \left. 
{\displaystyle -n,1-a-n,c
\atop \displaystyle \frac{1+c-a-n}{2},\frac{2+c-a-n}{2}}\right| 
\frac{1}{4}\right).\nonumber
\end{eqnarray}  

Define the function
\begin{equation}
\label{Rtdef}
\widetilde{R}_n(a,c)
=(1-a)_n(a)_n\,
{}_3F_2 \left( \left. 
{\displaystyle -n,a-c-n,c
\atop \displaystyle \frac{a-n}{2},\frac{1+a-n}{2}}\right| 
\frac{1}{4}\right).\\
\end{equation}

A trivial relation for $\widetilde{R}_n$ is 
\begin{equation}
\label{Rti1}
\widetilde{R}_n(a,c)=\widetilde{R}_n(a,a-c-n).
\end{equation}

Furthermore, Equation (\ref{1e5S1}) gives the nontrivial relation
\begin{equation}
\label{Rti2}
\widetilde{R}_n(a,c)=\widetilde{R}_n(1+c-a,c).
\end{equation}

The invariance group for the function 
$\widetilde{R}_n$ is isomorphic to the symmetric
group $S_3$. The six invariances of $\widetilde{R}_n$
are given by
\begin{eqnarray}
\label{RtI1}
&&\widetilde{R}_n(a,c)=\widetilde{R}_n(a,c),\\
\label{RtI2}
&&\widetilde{R}_n(a,c)=\widetilde{R}_n(a,a-c-n),\\
\label{RtI3}
&&\widetilde{R}_n(a,c)=\widetilde{R}_n(1+c-a,c),\\
\label{RtI4}
&&\widetilde{R}_n(a,c)=\widetilde{R}_n(1+c-a,1-a-n),\\
\label{RtI5}
&&\widetilde{R}_n(a,c)=\widetilde{R}_n(1-c-n,a-c-n),\\
\label{RtI6}
&&\widetilde{R}_n(a,c)=\widetilde{R}_n(1-c-n,1-a-n),
\end{eqnarray}
and they correspond to setting $b=a$ in 
relations (\ref{RI1})--(\ref{RI6}), respectively,
for the function $R_n(a,b,c)$.

If we reparameterize $\widetilde{R}_n(a,c)$ by
\begin{equation}
\label{Vtdef}
\widetilde{V}_n(x,y,z)
=\widetilde{R}_n\left(\frac{2+2x-y-z-n}{3},\frac{1+x+y-2z-2n}{3}\right),
\end{equation}
then $\widetilde{V}_n$ is invariant under all six permutations
of $x,y,z$. The invariances
\begin{eqnarray}
\label{VtI1}
&&\widetilde{V}_n(x,y,z)=\widetilde{V}_n(x,y,z),\\
\label{VtI2}
&&\widetilde{V}_n(x,y,z)=\widetilde{V}_n(x,z,y),\\
\label{VtI3}
&&\widetilde{V}_n(x,y,z)=\widetilde{V}_n(y,x,z),\\
\label{VtI4}
&&\widetilde{V}_n(x,y,z)=\widetilde{V}_n(y,z,x),\\
\label{VtI5}
&&\widetilde{V}_n(x,y,z)=\widetilde{V}_n(z,x,y),\\
\label{VtI6}
&&\widetilde{V}_n(x,y,z)=\widetilde{V}_n(z,y,x)
\end{eqnarray}
correspond to the invariances 
(\ref{RtI1})--(\ref{RtI6}), respectively,
of $\widetilde{R}_n(a,c)$.

Next, let us set $b=a-c-n$ in Proposition \ref{5P1}.
We obtain the following relation between two 
terminating ${}_3F_2(1/4)$ series:
\begin{eqnarray}
\label{1e5S2}
&&{}_3F_2 \left( \left. 
{\displaystyle -n,a,c
\atop \displaystyle \frac{a-n}{2},\frac{1+a-n}{2}}\right| 
\frac{1}{4}\right)\\
&&=\frac{(1+2c-a)_n}{(1-a)_n}
{}_3F_2 \left( \left. 
{\displaystyle -n,1+2c-a+n,c
\atop \displaystyle \frac{1+2c-a}{2},\frac{2+2c-a}{2}}\right| 
\frac{1}{4}\right).\nonumber
\end{eqnarray}  

We define the function
\begin{equation}
\label{Mdef}
M_n(a,c)
=(1-a)_n(1+c-a)_n\,
{}_3F_2 \left( \left. 
{\displaystyle -n,a,c
\atop \displaystyle \frac{a-n}{2},\frac{1+a-n}{2}}\right| 
\frac{1}{4}\right).
\end{equation}

We note that
Equation (\ref{1e5S2}) gives us
\begin{equation}
\label{Minv}
M_n(a,c)=M_n(1+2c-a+n,c),
\end{equation}
which is a relation of order 2. Since the function
$M_n(a,c)$ does not have any trivial relations besides
the identity, the invariance group for $M_n(a,c)$
is isomorphic to the symmetric group $S_2$ of order 2.
Furthermore, if we define
\begin{equation}
\label{Ldef}
L_n(x,y)=M_n\left(x,\frac{x+y-n-1}{2}\right),
\end{equation}
the nontrivial relation (\ref{Minv}) for $M_n(a,c)$
can be written as 
\begin{equation}
\label{Linv}
L_n(x,y)=L_n(y,x).
\end{equation}

The other nontrivial relation for the series
${}_3F_2 \left( \left. 
{\displaystyle -n,a,c
\atop \displaystyle \frac{a-n}{2},\frac{1+a-n}{2}}\right| 
\displaystyle{\frac{1}{4}}\right)$
is given in the next proposition and
it has two cases depending on whether
$n$ is even or odd:

\begin{Proposition}
\label{5P2}
If $n$ is a nonnegative integer, then
the following two relations hold:
\begin{eqnarray}
\label{1e5P2}
&&{}_3F_2 \left( \left. 
{\displaystyle -2n,a,c
\atop \displaystyle \frac{a}{2}-n,\frac{a+1}{2}-n}\right| 
\frac{1}{4}\right)\\
&&=\frac{(-1)^n(2n)!(c)_n}{n!(1-a)_{2n}}\nonumber\\
&&\times {}_3F_2 \left( \left. 
{\displaystyle -n,1+c-a+n,a-c-n
\atop \displaystyle \frac{1}{2},1-c-n}\right| 
\frac{1}{4}\right)\nonumber
\end{eqnarray}  
and
\begin{eqnarray}
\label{2e5P2}
&&{}_3F_2 \left( \left. 
{\displaystyle -2n-1,a,c
\atop \displaystyle \frac{a-1}{2}-n,\frac{a}{2}-n}\right| 
\frac{1}{4}\right)\\
&&=\frac{(-1)^n(1+c-a+n)(2n+1)!(c)_n}{n!(1-a)_{2n+1}}\nonumber\\
&&\times {}_3F_2 \left( \left. 
{\displaystyle -n,2+c-a+n,a-c-n
\atop \displaystyle \frac{3}{2},1-c-n}\right| 
\frac{1}{4}\right).\nonumber
\end{eqnarray}  
\end{Proposition}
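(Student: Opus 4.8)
The plan is to obtain both identities by reversing the order of summation in relation (\ref{2e4c2}), the ``other'' nontrivial relation for the $Q$-type ${}_3F_2(4)$ series, and to let the parity of the summation index dictate the two cases. Concretely, I would start from the left-hand side of the claimed identity, namely the $M$-type series
\[
{}_3F_2 \left( \left.
{\displaystyle -\nu,a,c
\atop \displaystyle \frac{a-\nu}{2},\frac{1+a-\nu}{2}}\right|
\tfrac{1}{4}\right)
\]
with $\nu=2n$ or $\nu=2n+1$, and apply the summation-reversal formula (\ref{rs}) with $p=q=2$. Using the duplication identity $(\tfrac{y}{2})_\nu(\tfrac{y+1}{2})_\nu=(y)_{2\nu}/4^{\nu}$ to collapse the two half-integer denominator parameters, this rewrites the $M$-type series as a constant multiple of
\[
{}_3F_2 \left( \left.
{\displaystyle -\nu,\frac{1-a-\nu}{2},\frac{2-a-\nu}{2}
\atop \displaystyle 1-a-\nu,1-c-\nu}\right|
4\right),
\]
which is exactly the $Q$-type series on the left of (\ref{2e4c2}) with $a$ and $c$ replaced by $1-a-\nu$ and $1-c-\nu$.

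Next I would apply relation (\ref{2e4c2}) with $a$ and $c$ replaced by $1-a-\nu$ and $1-c-\nu$. After this substitution the three parameters $c-a-\nu,\ 1+a-c,\ 1-c-\nu$ on the right of (\ref{2e4c2}) become $a-c-\nu,\ 1+c-a,\ c$, and its prefactor collapses to $(1+c-a)_\nu/(c)_\nu$ via $(1-c-\nu)_\nu=(-1)^\nu(c)_\nu$; this produces a constant multiple of the short ${}_3F_2(4)$ series
\[
{}_3F_2 \left( \left.
{\displaystyle -\frac{\nu}{2},\frac{1-\nu}{2},c
\atop \displaystyle a-c-\nu,1+c-a}\right|
4\right).
\]
This is the step where the parity of $\nu$ becomes essential: when $\nu=2n$ the terminating numerator parameter is $-\nu/2=-n$, whereas when $\nu=2n+1$ it is $(1-\nu)/2=-n$, with the first parameter now the non-integer $-n-\tfrac12$. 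In each case I would reorder the numerator parameters so that the negative-integer parameter appears first and then reverse this length-$(n+1)$ series by (\ref{rs}). The even case produces the denominator pair $\tfrac12,\,1-c-n$ with numerators $1+c-a+n,\,a-c-n$, while the odd case produces the denominator $\tfrac32$ with the shifted numerator $2+c-a+n$; these are precisely the right-hand-side series of (\ref{1e5P2}) and (\ref{2e5P2}).

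The remaining work is to collect the three accumulated prefactors, and the main obstacle here is purely the Pochhammer bookkeeping. The key simplifications I would use are the reflection identity $(x)_m=(-1)^m(1-x-m)_m$ (applied to turn $(a-\nu)_\nu$ into $(-1)^\nu(1-a)_\nu$, $(\tfrac12-n)_n$ into $(-1)^n(\tfrac12)_n$, $(a-c-2n)_n$ into $(-1)^n(1+c-a+n)_n$, and their odd-case analogues), together with $(\tfrac12)_n=(2n)!/(4^n n!)$ and $(\tfrac32)_n=(2n+1)!/(4^n n!)$. In the even case these collapse the product to $(-1)^n(2n)!(c)_n/[n!\,(1-a)_{2n}]$, matching (\ref{1e5P2}); in the odd case the odd parity of $\nu=2n+1$ leaves the extra linear factor $(1+c-a+n)$ coming from $(1+c-a)_{2n+1}/(1+c-a)_n=(1+c-a+n)(2+c-a+n)_n$, giving $(-1)^n(1+c-a+n)(2n+1)!(c)_n/[n!\,(1-a)_{2n+1}]$, matching (\ref{2e5P2}). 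I expect the parity split in the middle reversal to be the only genuine subtlety; everything else is routine factorial algebra.
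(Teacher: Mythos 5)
Your proof is correct, but it takes a genuinely different route from the paper's. The paper replaces $n$ by $2n$ (resp.\ $2n+1$) in the ${}_4F_3(1/4)$ relation (\ref{RI5}) and then performs the singular limit $b \to a-c-2n$: the Pochhammer prefactor $(1+a-b-c-4n)_{2n}$ tends to zero while the terms with $k \geq n$ of the series blow up, and the delicate part of the argument is showing that this $0\cdot\infty$ cancellation kills the bottom half of the series and leaves exactly the terms $k=n,\dots,2n$, which after the shift $k=n+m$ become the right-hand ${}_3F_2(1/4)$. You instead avoid any limit: you reverse the $M$-type series into the $Q$-type ${}_3F_2(4)$ series, invoke the already-established relation (\ref{2e4c2}) (where the specialization $b=1+a-c$ was done nonsingularly at the level of (\ref{TI5})), and observe that the half-length termination is then manifest from the numerator pair $-\tfrac{\nu}{2},\tfrac{1-\nu}{2}$, with the parity of $\nu$ deciding which of the two is the nonpositive integer; a second reversal of the resulting $(n+1)$-term series yields the stated right-hand sides. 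I checked your Pochhammer bookkeeping in both parities (in particular $\left(\tfrac12-n\right)_n=(-1)^n(2n)!/(4^n n!)$, $(a-c-2n)_n=(-1)^n(1+c-a+n)_n$, and the extra factor $(1+c-a+n)$ arising from $(1+c-a)_{2n+1}$ in the odd case) and it reproduces the prefactors of (\ref{1e5P2}) and (\ref{2e5P2}) exactly. What your route buys is the elimination of the singular limit, which is the only nontrivial analytic step in the paper's proof; what the paper's route buys is that it stays entirely within the ${}_4F_3(1/4)$ framework of Section 5 and exhibits the result directly as a degenerate member of the family (\ref{RI5}), whereas yours travels back through the argument-$4$ world of Section 4. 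Both arguments ultimately rest on the same group element (\ref{TI5}).
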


\begin{proof}
To prove (\ref{1e5P2}), we start by replacing $n$ with $2n$ in relation
(\ref{RI5}) (or (\ref{RI6}))
for the terminating ${}_4F_3(1/4)$ series.
The resulting relation can be written as
\begin{eqnarray}
\label{1ep5P2}
&&{}_4F_3 \left( \left. 
{\displaystyle -2n,a,a-c-2n,c
\atop \displaystyle \frac{a}{2}-n,\frac{a+1}{2}-n,b}\right| 
\frac{1}{4}\right)\\
&&=\frac{(c)_{2n}(1+a-b-c-4n)_{2n}}{(1-a)_{2n}(b)_{2n}}\nonumber\\
&&\times
{}_4F_3 \left( \left. 
{\displaystyle -2n,1+a-b-c-2n,1-b-2n,a-c-2n
\atop \displaystyle \frac{1+a-b-c-4n}{2},\frac{2+a-b-c-4n}{2},1-c-2n}\right| 
\frac{1}{4}\right).\nonumber
\end{eqnarray}  

We now let $b \to a-c-2n$ in (\ref{1ep5P2}). The left-hand side of (\ref{1ep5P2})
turns into the left-hand side of (\ref{1e5P2}). For the 
right-hand side of (\ref{1ep5P2}), we have
\begin{eqnarray}
\label{2ep5P2}
&&\lim_{b \to a-c-2n} \left(
\frac{(c)_{2n}(1+a-b-c-4n)_{2n}}{(1-a)_{2n}(b)_{2n}}\right.\\
&&\left.\times
{}_4F_3 \left( \left. 
{\displaystyle -2n,1+a-b-c-2n,1-b-2n,a-c-2n
\atop \displaystyle \frac{1+a-b-c-4n}{2},\frac{2+a-b-c-4n}{2},1-c-2n}\right| 
\frac{1}{4}\right)\right)\nonumber\\
&&=\frac{(c)_{2n}}{(1-a)_{2n}(1+c-a)_{2n}}
\lim_{b \to a-c-2n} \Bigg(
(1+a-b-c-4n)_{2n}\nonumber\\
&&\left.\times
\sum_{k=0}^{2n}  
\frac{(-2n)_k(1+a-b-c-2n)_k(1-b-2n)_k(a-c-2n)_k}
{k!(1+a-b-c-4n)_{2k}(1-c-2n)_k}
\right)\nonumber\\
&&=\frac{(c)_{2n}}{(1-a)_{2n}(1+c-a)_{2n}}\nonumber\\
&&\times \lim_{b \to a-c-2n}
\sum_{k=n}^{2n}  
\frac{(-2n)_k(1+a-b-c-2n)_k(1-b-2n)_k(a-c-2n)_k}
{k!(1+a-b-c-2n)_{2k-2n}(1-c-2n)_k}\nonumber\\
&&=\frac{(c)_{2n}}{(1-a)_{2n}(1+c-a)_{2n}}\nonumber\\
&&\times 
\sum_{k=n}^{2n}  
\frac{(-2n)_k(1+c-a)_k(a-c-2n)_k}
{(1)_{2k-2n}(1-c-2n)_k}.\nonumber
\end{eqnarray} 

After the change of index $k=n+m$, the right-hand side above becomes
\begin{eqnarray}
\label{3ep5P2}
&&\frac{(c)_{2n}}{(1-a)_{2n}(1+c-a)_{2n}}\\
&&\times 
\sum_{m=0}^{n}  
\frac{(-2n)_{n+m}(1+c-a)_{n+m}(a-c-2n)_{n+m}}
{(1)_{2m}(1-c-2n)_{n+m}}\nonumber\\
&&=\frac{(c)_{2n}(-2n)_n(1+c-a)_n(a-c-2n)_n}
{(1-a)_{2n}(1+c-a)_{2n}(1-c-2n)_n}\nonumber\\
&&\times 
{}_3F_2 \left( \left. 
{\displaystyle -n,1+c-a+n,a-c-n
\atop \displaystyle \frac{1}{2},1-c-n}\right| 
\frac{1}{4}\right),\nonumber
\end{eqnarray} 
which simplifies to the
right-hand side of Equation (\ref{1e5P2}),
thus proving (\ref{1e5P2}).

The proof of (\ref{2e5P2}) follows similar lines: 
we start by replacing $n$ with $2n+1$ in relation
(\ref{RI5}) (or (\ref{RI6}))
for the terminating ${}_4F_3(1/4)$ series
and then let $b \to a-c-2n-1$ 
in the resulting relation
in the same manner as 
above.
\end{proof}

As a final result, 
we obtain an equation for the sum of the first 
$n+1$ terms of a certain nonterminating 
${}_3F_2(1/4)$ hypergeometric series.
We do this by letting
$b=-n$ in Proposition \ref{5P1}.
We consequently obtain an equation for the sum of the first
$n+1$ terms of the nonterminating series
${}_3F_2 \left( \left. 
{\displaystyle a,a-c-n,c
\atop \displaystyle \frac{a-n}{2},\frac{1+a-n}{2}}\right| 
\displaystyle{\frac{1}{4}}\right)$:
\begin{eqnarray}
\label{1e5S3}
&&\left[{}_3F_2 \left( \left. 
{\displaystyle a,a-c-n,c
\atop \displaystyle \frac{a-n}{2},\frac{1+a-n}{2}}\right| 
\frac{1}{4}\right)\right]_n\\
&&=\frac{(1+c-a)_n(1+c)_n}{n!(1-a)_n}
{}_4F_3 \left( \left. 
{\displaystyle -n,1+c+n,c,1
\atop \displaystyle \frac{1+c}{2},\frac{2+c}{2},1+c-a}\right| 
\frac{1}{4}\right).\nonumber
\end{eqnarray}

\end{document}